\newtheorem{thm}{Theorem}[section]
\newtheorem{cor}[thm]{Corollary}
\newtheorem{pro}[thm]{Proposition}
\newtheorem{alg}[thm]{Algorithm}
\theoremstyle{definition}
\newtheorem{ex}[thm]{Example}
\numberwithin{equation}{section}
\newcommand{\Z}{\mathbb{Z}}
\newcommand{\Q}{\mathbb{Q}}
\newcommand{\R}{\mathbb{R}}
\newcommand{\ord}[2]{\nu_{#1}\left(#2\right)}
\begin{document}


\baselineskip=17pt


\title{A Division Algorithm Approach to $p$-Adic Sylvester Expansions}

\author{Eric Errthum\\
Department of Mathematics and Statistics\\ 
Winona State University\\
Winona, MN\\
E-mail: eerrthum@winona.edu\\
}

\date{}

\maketitle


\renewcommand{\thefootnote}{}

\footnote{2010 \emph{Mathematics Subject Classification}: Primary 11A67; Secondary 11J61.}

\footnote{\emph{Key words and phrases}: $p$-adic number, $p$-adic division algorithm, Sylvester series expansion.}

\renewcommand{\thefootnote}{\arabic{footnote}}
\setcounter{footnote}{0}


\begin{abstract}
A method of constructing finite $p$-adic Sylvester expansions for all rationals is presented. This method parallels the classical Fibonacci-Sylvester (greedy) algorithm by iterating a $p$-adic division algorithm. The method extends to irrational $p$-adics that have an embedding in the reals.
\end{abstract}

\begin{section}{Introduction}
In \cite{KK1} and \cite{KK2} A. Knopfmacher and J. Knopfmacher give algorithms for constructing Egyptian fraction expansions in a $p$-adic setting that are analogous to those given for the reals by Oppenheim \cite{Oppen}. 
However, for some positive rational inputs the Knopfmachers' algorithm fails to return a finite expansion. In Section 2 we review the basics of the Knopfmachers' Sylvester-type algorithm and give such an example. We then introduce a modification of their algorithm that will give finite Sylvester expansions for all rationals. The seemingly unnatural correction term in our algorithm is explained by the alternate approach detailed in the final two sections.

In Section 3 we briefly recall the Fibonacci-Sylvester Greedy Algorithm, especially its relationship to the classical division algorithm. This provides the main motivation for Section 4 wherein we define a new $p$-adic division algorithm and use the same relationship to construct finite rational $p$-adic Sylvester expansions. Lastly we show that the division algorithm approach and the given modification to Knopfmachers' algorithm yield the same output.
\end{section}

\begin{section}{$p$-Adic Numbers}\label{padic}
\subsection{
Basics of $p$-Adic Numbers}
We begin with some of the necessary basics of $p$-adic numbers. A more thorough exposition can be found in \cite{Koblitz}. 
Let $p$ be a prime and $\Q_p$ the completion of the rationals with respect to the $p$-adic absolute value $|\!\cdot\! |_p$ defined on $\Q$ by 
$$|0|_p = 0 \text{ and } |r|_p = p^{-\nu} \text{ if } r=\frac{a}{b}p^\nu\text{, where }p \nmid ab.$$
The exponent $\nu \in \Z$ is the $p$-adic valuation, or order, of $r$ and will be denoted $\ord{p}{r}$. 

\begin{pro}[c.f. \cite{Koblitz}]\label{Basics} Let nonzero $\zeta,\xi \in \Q_p$.
\begin{enumerate}
\item There exists a unique $\widehat{\zeta} \in \Q_p$ such that $\ord{p}{\widehat{\zeta}} = 0$ and $\zeta=\widehat{\zeta}p^{\ord{p}{\zeta}}$. We will call $\widehat{\zeta}$ the unit part of $\zeta$.
\item $\ord{p}{\zeta\xi}=\ord{p}{\zeta}+\ord{p}{\xi}$.
\item $\ord{p}{\zeta+\xi}\geq\text{min}\{\ord{p}{\zeta},\ord{p}{\xi}\}$.  We have equality when $\ord{p}{\zeta}\neq\ord{p}{\xi}$.
\end{enumerate}
\end{pro}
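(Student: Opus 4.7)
The plan is to dispatch the three parts in order, reducing each to the definition of $\ord{p}{\cdot}$ together with the standard fact from \cite{Koblitz} that $\ord{p}{\cdot}$ extends continuously from $\Q$ to $\Q_p$ and that $\Z_p:=\{\zeta\in\Q_p:\ord{p}{\zeta}\geq 0\}$ is a subring whose unit group is exactly $\{\zeta:\ord{p}{\zeta}=0\}$. With these facts in hand, everything else is formal manipulation.

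For part (1), I would define $\widehat{\zeta}:=\zeta\, p^{-\ord{p}{\zeta}}$ and observe that this is forced: if $\zeta=u\,p^{\ord{p}{\zeta}}$ with $\ord{p}{u}=0$, then necessarily $u=\widehat{\zeta}$, which gives uniqueness for free. To verify that $\ord{p}{\widehat{\zeta}}=0$ I would approximate $\zeta$ by a rational of the same $p$-adic order and invoke the definition of $\ord{p}{\cdot}$ on $\Q$.

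For part (2), I would use (1) to write $\zeta=\widehat{\zeta}\,p^a$ and $\xi=\widehat{\xi}\,p^b$ with $a=\ord{p}{\zeta}$, $b=\ord{p}{\xi}$, so that $\zeta\xi=(\widehat{\zeta}\widehat{\xi})\,p^{a+b}$. Because $\widehat{\zeta}\widehat{\xi}\in\Z_p^{\times}$, the uniqueness clause in (1) identifies $\widehat{\zeta}\widehat{\xi}$ as the unit part of $\zeta\xi$, forcing $\ord{p}{\zeta\xi}=a+b$.

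For part (3), set $m=\min\{a,b\}$ and factor to obtain $\zeta+\xi=p^m\bigl(\widehat{\zeta}\,p^{a-m}+\widehat{\xi}\,p^{b-m}\bigr)$. Both summands inside the parentheses lie in $\Z_p$, so their sum does too, giving $\ord{p}{\zeta+\xi}\geq m$. For the equality clause, assume without loss of generality $a<b$, so the parenthesized expression becomes $\widehat{\zeta}+\widehat{\xi}\,p^{b-a}$, a unit plus an element of $p\Z_p$; if this sum lay in $p\Z_p$ then $\widehat{\zeta}$ would as well, contradicting $\ord{p}{\widehat{\zeta}}=0$. The only mild obstacle is to justify closure of $\Z_p$ under addition in the completion, which is routine but essential; I would simply quote it from \cite{Koblitz}.
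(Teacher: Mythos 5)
The paper itself gives no proof of this proposition: it is stated as background with the attribution ``c.f.~\cite{Koblitz}'' and simply cited from there, so there is no in-paper argument to compare yours against. Your proof is the standard one and is essentially correct, with one step that deserves tightening. In part (2) you invoke ``the uniqueness clause in (1)'' to conclude $\ord{p}{\zeta\xi}=a+b$, but uniqueness as you stated it only identifies the unit $u$ in a decomposition whose exponent is \emph{already} known to be $\ord{p}{\zeta\xi}$; it does not by itself rule out a decomposition $\zeta\xi=u\,p^{n}$ with $\ord{p}{u}=0$ and $n\neq\ord{p}{\zeta\xi}$. What you need is the slightly stronger fact that in any such decomposition the exponent is forced, i.e.\ $\eta=u\,p^{n}$ with $\ord{p}{u}=0$ implies $n=\ord{p}{\eta}$. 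This is one line --- if $u\,p^{n}=u'\,p^{n'}$ with both units and $n<n'$, then $u=u'\,p^{n'-n}$ has positive order, a contradiction; alternatively, read the order off the canonical expansion (\ref{shape}) of $u\,p^{n}$, whose lowest nonzero coefficient sits at $p^{n}$ --- but it is the load-bearing step both in part (2) and in your conclusion for the equality case of part (3), where you likewise pass from ``$p^{m}\cdot(\text{unit})$'' to ``order exactly $m$.'' With that line supplied, the rest is sound: defining $\widehat{\zeta}=\zeta\,p^{-\ord{p}{\zeta}}$ and verifying $\ord{p}{\widehat{\zeta}}=0$ by density of $\Q$ avoids any circular appeal to part (2), and quoting from \cite{Koblitz} that $\mathbb{Z}_p=\{\eta:\ord{p}{\eta}\geq 0\}$ is a ring with unit group $\{\eta:\ord{p}{\eta}=0\}$ is a legitimate move here, since the paper treats the entire proposition at exactly that level of citation.
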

Every element of $\Q_p$ has the shape \begin{eqnarray}\label{shape}\zeta = \sum_{n=\ord{p}{\zeta}}^\infty c_n p^n \end{eqnarray} for $c_n \in \{0,1, \dots, p-1\}$. 
This representation is unique, however $p$-adic numbers can also be represented in the form 
\begin{eqnarray}
\zeta = a_0 + \sum_{n=1} \frac{1}{a_n} \label{EgyptSeries}
\end{eqnarray}
where $a_n \in \Z[\tfrac{1}{p}]$ and the sum is either finite or converges $p$-adically. There are a variety of algorithms to decompose a $p$-adic into this form (c.f. \cite{KK1}). One reason to study such representations is that they can be considered the $p$-adic analogue to representing real numbers as Egyptian fractions, i.e. as the sum of unit fractions.  

\subsection{
Sylvester-type Series Expansions of $p$-Adic Numbers}

There are various methods of decomposing a rational number into an Egyptian fraction representation. One of the more na{\"i}ve methods was given originally by Fibonacci and again in modern times by Sylvester \cite{Sylv}. This algorithm is  commonly known as the Greedy Algorithm because at each inductive step of the decomposition one simply takes the largest unit fraction smaller than the value being decomposed. Later, Oppenheim \cite{Oppen} generalized this and other Egyptian fraction algorithms to real numbers given by their decimal representations.

The following inductive algorithm, which results in an expansion like (\ref{EgyptSeries}), is presented by the Knopfmachers in \cite{KK1} as a $p$-adic analogue to the Sylvester-Oppenheim algorithm on real numbers. Begin by defining the fractional part of a $p$-adic number $\zeta$ as in (\ref{shape}) by 
\begin{eqnarray}\label{bracket}\langle \zeta \rangle = \sum_{n=\ord{p}{\zeta}}^0 c_n p^n.\end{eqnarray} 

\begin{alg}[\cite{KK1}]\label{Knopf} Let $\zeta \in \Q_p$. For the initial term, set $a_0 = \langle \zeta \rangle 
$. Then take $\zeta_1 = \zeta - a_0$ so that $\ord{p}{\zeta_1} \ge 1$. Continuing as long as $\zeta_n \neq 0$, let $$a_n = \left\langle \frac{1}{\zeta_n}\right\rangle \text{ and } \zeta_{n+1} = \zeta_n - \frac{1}{a_n}.$$ \end{alg}

In \cite{KK2} (see Proposition 5.3) it is stated that the above Sylvester-type algorithm and a similarly defined Engel-type algorithm terminate if and only if $\zeta \in \Q$. However  in \cite{GrabK} Grabner and A. Knopfmacher give an example of a rational with nonterminating Engel-type algorithm. Likewise, there are rationals for which this Sylvester-type algorithm does not terminate.

Indeed, suppose $\zeta = \frac{a}{p+a} \in \Q$ with $p \nmid a$. Then $a_0=1$ and $\zeta_1 < 0$. Since all the $a_n$ are defined to be positive, a finite sum in (\ref{EgyptSeries}) leads to a contradiction. 
Although Laohakosol and Kanasri \cite{LK} give a complete characterization of the infinite Sylvester-type expansions from the Knopfmachers' algorithm that correspond to rational numbers, it is less obvious the necessary conditions under which a rational will result in a finite expansion. 

\subsection{A New Sylvester-Type Algorithm}
We start by generalizing the definition in (\ref{bracket}) in the following way. Let
$$\langle \zeta \rangle_k = \sum_{n=\ord{p}{\zeta}}^{k-1} c_n p^n$$
so that $\langle \zeta \rangle_1=\langle \zeta \rangle$. In other words, $\langle \zeta \rangle_k$ is the rational image of $\zeta$ under the $\mod p^k$ projection from $\Q_p$ to $\Z[\frac{1}{p}]$.

For real $x$, define the ceiling  function, $\lceil x \rceil$, to be the least integer greater than $x$. Notice that since $\Q_p$ is not an ordered field, this function is not well-defined for all $p$-adics. However, if $\zeta \in \Q_p$ such that there exists an embedding of $\Q(\zeta)$ into $\mathbb{R}$, then the ceiling function pulls back to $\Q(\zeta) \subset \Q_p$. 

We now state our modification to Algorithm \ref{Knopf}.
\begin{alg}\label{padicSyl}
Let  nonzero $\zeta \in \Q_p$ such that there exists an embedding $\psi:\Q(\zeta) \to \R$ and $k\in \Z$ such that $k>-\ord{p}{\zeta}$. Set $\zeta_0=\zeta$. Inductively for $i \ge 0$ set $t_i=\left\langle \frac{1}{\zeta_i}\right\rangle_k$, $$q_i=t_i+\left\lceil\frac{1-t_i\psi(\zeta_i)}{p^k\psi(\zeta_i)}\right\rceil p^k,$$ and \begin{align}\label{induct}\zeta_{i+1} = \zeta_i-\frac{1}{q_i}.\end{align} The algorithm terminates if any $\zeta_N=0$.
\end{alg}
\begin{thm}\label{converges}
Algorithm \ref{padicSyl} produces a sequence of $q_i \in \Z[\frac{1}{p}]$ such that $$\zeta = \sum_{i=0} \frac{1}{q_i}$$ where the sum (if infinite) converges $p$-adically.
\begin{proof}
By (\ref{induct}), if the sum converges, it does so to $\zeta$. It suffices to show that $|\zeta_i|_p \to 0$.

Suppose $\ord{p}{\zeta_i}=s>-k$ so that $\zeta_i = \widehat{\zeta_i}p^s$.  Then $$t_i = \left\langle \frac{1}{\widehat{\zeta_i}}p^{-s}\right\rangle_k = \left\langle \widehat{\zeta_1}^{-1}\right\rangle_{k+s} p^{-s}$$
and
$$q_i =  \left( \left\langle \widehat{\zeta_1}^{-1}\right\rangle_{k+s}+mp^{k+s}\right)p^{-s}$$
for some $m \in \Z$. Then $$\zeta_iq_i-1 = \widehat{\zeta_i}\left\langle \widehat{\zeta_1}^{-1}\right\rangle_{k+s}+\widehat{\zeta_i}mp^{k+s}-1 \equiv 0 \mod p^{k+s}$$
so $\ord{p}{\zeta_iq_i-1} \ge k+s.$ Then $\ord{p}{\zeta_{i+1}} \ge k+s-\ord{p}{q_i}$. Since $k>-s$, then $\ord{p}{q_i} \ge -s$. Hence, $\ord{p}{\zeta_{i+1}} \ge k+2s > \ord{p}{\zeta_i}$. Since the order of the $\zeta_i$ is strictly increasing, $|\zeta_i|_p \to 0$.
\end{proof}
\end{thm}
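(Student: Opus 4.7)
The plan is to reduce the theorem to the single assertion that $\ord{p}{\zeta_i} \to \infty$. Iterating the defining recursion (\ref{induct}) telescopes the partial sums into $\sum_{i=0}^{N-1} \frac{1}{q_i} = \zeta - \zeta_N$, so the series converges $p$-adically to $\zeta$ if and only if $|\zeta_N|_p \to 0$. Thus the whole proof rests on tracking the $p$-adic valuation of the remainders.

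To carry this out I would first unpack the structure of $q_i$. Setting $s = \ord{p}{\zeta_i}$ and writing $\zeta_i = \widehat{\zeta_i}\, p^s$, the truncation bracket rescales as $t_i = \langle \widehat{\zeta_i}^{-1}\rangle_{k+s}\, p^{-s}$, and since the ceiling in the algorithm contributes an integer multiple of $p^k$, one obtains $q_i = \bigl(\langle \widehat{\zeta_i}^{-1}\rangle_{k+s} + m\,p^{k+s}\bigr)\,p^{-s}$ for some integer $m$. This immediately shows $q_i \in \Z[\frac{1}{p}]$, and under the running assumption $k > -s$ forces $\ord{p}{q_i} = -s$, because the leading bracket term is a $p$-adic unit.

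The key step is the congruence $\widehat{\zeta_i}\,\langle \widehat{\zeta_i}^{-1}\rangle_{k+s} \equiv 1 \pmod{p^{k+s}}$, which is immediate from the definition of $\langle\,\cdot\,\rangle_{k+s}$ as a truncation of the $p$-adic expansion of $\widehat{\zeta_i}^{-1}$. Multiplying through by the remaining factors gives $\ord{p}{\zeta_i q_i - 1} \ge k+s$, and dividing by $q_i$ yields $\ord{p}{\zeta_{i+1}} \ge k + 2s > s$ by Proposition \ref{Basics}. So the valuations strictly increase at each step, which is more than enough for convergence.

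The main obstacle --- really the only subtle point --- is that the bound $k > -\ord{p}{\zeta_i}$ must hold at every iteration, not merely at the start. This, however, is automatic from the strict monotonicity just established: the initial hypothesis $k > -\ord{p}{\zeta_0}$ propagates forward since $\ord{p}{\zeta_{i+1}} > \ord{p}{\zeta_i}$ implies $-\ord{p}{\zeta_{i+1}} < -\ord{p}{\zeta_i} < k$. The ceiling correction itself, which will be essential later for the division-algorithm interpretation and for the real-valued behavior of the algorithm, plays no role in $p$-adic convergence: any integer multiple of $p^k$ added to $t_i$ would suffice for this theorem alone.
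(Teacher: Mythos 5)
Your proposal is correct and follows essentially the same route as the paper's proof: the same rescaling $t_i = \langle \widehat{\zeta_i}^{-1}\rangle_{k+s}\,p^{-s}$, the same congruence $\zeta_i q_i \equiv 1 \pmod{p^{k+s}}$, and the same valuation bound $\ord{p}{\zeta_{i+1}} \ge k+2s > \ord{p}{\zeta_i}$. Your two additions --- noting explicitly that the hypothesis $k > -\ord{p}{\zeta_i}$ propagates by the monotonicity itself (the paper leaves this implicit in its ``Suppose $\ord{p}{\zeta_i}=s>-k$''), and sharpening $\ord{p}{q_i} \ge -s$ to equality via the unit leading term --- are sound refinements rather than a different argument.
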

\begin{ex}
Let $k=1$ and consider $\xi \in \Q_7$ with $\xi^2 = \frac{1}{11}$ and $\xi \equiv 4 \mod 7$. Then $\Q(\xi)$ embeds into $\R$ by either $\psi(\xi) = \frac{1}{\sqrt{11}}$ or $\psi(\xi) = \frac{-1}{\sqrt{11}}$. For the first choice, Algorithm \ref{padicSyl} gives
$$\xi = \frac{1}{9}+\frac{7}{66}+\frac{7^3}{4709}+\frac{7^7}{72282453}+\cdots .$$
The second embedding yields
$$\xi =\frac{1}{2}+\frac{7}{12}+\frac{7^3}{617}+\frac{7^7}{1045103}+\cdots.$$
\end{ex}
Algorithm \ref{padicSyl} certainly isn't as elegant looking as Knopfmachers' Algorithm and has limitations for which $p$-adics it can be used on. However the importance of Algorithm \ref{padicSyl} is in the following theorem.
\begin{thm}\label{MainThm}
Algorithm \ref{padicSyl} terminates if and only if $\zeta \in \Q$.
\end{thm}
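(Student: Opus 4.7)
The plan is to dispose of the two directions separately. The "only if" direction is immediate: if Algorithm \ref{padicSyl} terminates at step $N$, then $\zeta = \sum_{i=0}^{N-1} 1/q_i$ is a finite sum of elements of $\Zp \subset \Q$, and so $\zeta \in \Q$.

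For the converse, suppose $\zeta \in \Q$. Then $\Q(\zeta)=\Q$ and $\psi$ is the identity embedding, and an easy induction shows that each $\zeta_i$ is rational. I would write $\widehat{\zeta_i} = A_i/B_i$ in lowest terms with $B_i>0$ and $p \nmid A_iB_i$, and set $q_i = Q_i p^{-s_i}$ with $Q_i \in \Z$ and $p \nmid Q_i$ (which is implicit in the proof of Theorem \ref{converges}). The strategy is to exhibit a strict monotone descent on the non-negative integer sequence $|A_i|$; once $|A_i|$ reaches $0$ we have $\zeta_i=0$, and the algorithm halts.

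The key observation is that the formula for $q_i$ is engineered to satisfy two independent bounds on $\zeta_i q_i - 1$. The $p$-adic bound $\ord{p}{\zeta_i q_i - 1}\geq k+s_i$ is exactly what was used to prove Theorem \ref{converges}; after clearing denominators it reads $p^{k+s_i} \mid (A_iQ_i-B_i)$. The real bound comes from the ceiling: $q_i$ is the minimal value of the form $t_i + mp^k$ satisfying $q_i \geq 1/\zeta_i$, so $q_i - 1/\zeta_i \in [0,p^k)$, from which one extracts $|q_i\zeta_i-1|<p^k|\zeta_i|$, i.e. $|A_iQ_i-B_i|<p^{k+s_i}|A_i|$. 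Combining the two, we may write $A_iQ_i - B_i = p^{k+s_i} R_i$ with $R_i \in \Z$ and $|R_i|<|A_i|$.

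A short calculation then gives $\zeta_{i+1} = p^{2s_i+k}R_i/(B_iQ_i)$. If $R_i=0$ we are done; otherwise $B_iQ_i$ is coprime to $p$, so the numerator of $\widehat{\zeta_{i+1}}$ in lowest terms divides $R_i$ (after stripping any residual $p$-power), yielding $|A_{i+1}| \leq |R_i| < |A_i|$. I expect the main care point to be verifying the real-side inequality uniformly for both signs of $\zeta_i$, as the ceiling is asymmetric and the inequality must flip when $\zeta_i<0$. Beyond that sign bookkeeping, the argument is a clean descent directly mirroring the strict numerator decrease in the classical Fibonacci--Sylvester analysis highlighted earlier in the paper.
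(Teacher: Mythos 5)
Your proposal is correct, and it reaches the theorem by a genuinely different route than the paper, which never argues directly: the paper first proves (Theorem \ref{same}) that for rational $\zeta$ the quotient $q_i$ of Algorithm \ref{padicSyl} satisfies $0 \le aq_i - b < ap^k$ and $|aq_i-b|_p \le |ap^k|_p$ where $\zeta_i = a/b$, so that by the uniqueness clause of Theorem \ref{pkDivAlg} the algorithm coincides step-by-step with the $p^k$-Greedy Algorithm, and then invokes Theorem \ref{terminates}, where termination follows from the strict decrease $\widehat{r}_i \le \overline{r}_i < \widehat{r}_{i-1}$ of the unit parts of the division-algorithm remainders. Your argument compresses both steps into one self-contained descent: your integer $R_i$ with $A_iQ_i - B_i = p^{k+s_i}R_i$ is, up to a power of $p$ and the denominator bookkeeping, exactly the division-algorithm remainder, and your bound $|A_{i+1}| \le |R_i| < |A_i|$ is the same strict decrease, but you derive the two defining inequalities directly from the ceiling and the truncation $\langle\cdot\rangle_k$ instead of routing through Theorem \ref{pkDivAlg} and Theorem \ref{same}. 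This buys a shorter, more portable proof of Theorem \ref{MainThm} alone; what the paper's factorization buys is its actual point --- the structural identification of Algorithm \ref{padicSyl} with iterating a $p$-adic division algorithm in parallel with the classical Fibonacci--Sylvester analysis --- plus reuse of the already-proved Theorem \ref{terminates}. Two remarks on your flagged care points: the sign worry evaporates, since the flip from multiplying by $p^k\psi(\zeta_i)$ inside the ceiling inequality cancels against the flip from dividing by $\zeta_i$ afterwards, so that $0 \le q_i - 1/\zeta_i < p^k$ holds uniformly for both signs (this is precisely the quantity $r/a$ in the paper's proof of Theorem \ref{same}); and your tacit reading of the ceiling as $\min\{n \in \Z : n \ge x\}$ is the right one --- under the paper's literal phrase ``least integer greater than $x$'' one would get $0 < q_i - 1/\zeta_i \le p^k$, the remainder could never vanish, and no input would terminate, so your convention is the one the paper must intend.
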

Instead of proving this theorem directly, in the following sections we will re-frame the approach to finding $p$-adic Sylvester expansions for rationals to mimic a more classical technique. In Section \ref{proof} we will show the link between the two and the proof of Theorem \ref{MainThm} will follow easily.

\end{section}

\begin{section}{The Fibonacci-Sylvester Greedy Algorithm}

The algorithm given by Fibonacci and Sylvester for Egyptian fractions of rationals can be interpretted 
as iterating a modified version of the classical division algorithm 
(c.f. \cite{Mays}). We review these classical methods now to  provide reference and motivation for the techniques used later.

\begin{thm}[Modifed Classical Division Algorithm]\label{DivAlg}
For all $a,b\in\mathbb{Z}$, $a>0$, there exist unique $q$, $r \in \Z$ such that 
\begin{eqnarray}
b=aq-r \label{ClassicMain}
\end{eqnarray} with  
\begin{eqnarray}
0 \le r<a. \label{ClassicBounds}
\end{eqnarray}\end{thm}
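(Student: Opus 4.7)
The plan is to reduce this to the standard (unmodified) classical division algorithm, which is presumably taken as known. Applying the usual algorithm to $b$ and $a$ yields unique $q_0, r_0 \in \Z$ with $b = aq_0 + r_0$ and $0 \le r_0 < a$. If $r_0 = 0$, set $q = q_0$ and $r = 0$. Otherwise set $q = q_0 + 1$ and $r = a - r_0$; then $0 < r < a$ and a direct substitution gives
\[
aq - r = a(q_0 + 1) - (a - r_0) = aq_0 + r_0 = b,
\]
establishing existence in both cases. An equivalent, perhaps slicker route is to apply the classical algorithm to $-b$ and $a$ to obtain $-b = aq' + r'$ with $0 \le r' < a$, and then read off $q = -q'$ and $r = r'$. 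Either packaging works; I would pick whichever is notationally cleanest.

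For uniqueness, suppose $b = aq_1 - r_1 = aq_2 - r_2$ with both remainders in $[0, a)$. Subtracting gives $a(q_1 - q_2) = r_1 - r_2$, so $a \mid (r_1 - r_2)$. Since $|r_1 - r_2| < a$, we must have $r_1 = r_2$, and then $a > 0$ forces $q_1 = q_2$.

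There is no real obstacle here: the content of the theorem is entirely a sign flip relative to the standard division algorithm, corresponding geometrically to rounding $b/a$ up rather than down. The only point that deserves a sentence of care is treating the $r_0 = 0$ case separately, since otherwise one would be tempted to write $r = a - r_0$ and violate the bound $r < a$. This mirrors the motivation for the upcoming $p$-adic version in Section 4, where the ceiling function (already invoked in Algorithm \ref{padicSyl}) will play the role that the floor plays in the ordinary division algorithm.
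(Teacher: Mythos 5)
Your proof is correct, but note that the paper never proves Theorem \ref{DivAlg} at all: it is stated as a classical fact in a review section, with the reader pointed to \cite{Mays}, so there is no in-paper argument to compare against. Your blind proof fills that (deliberate) gap soundly. Both of your existence packagings work; the reduction via $-b = aq' + r'$ is the cleaner one since it needs no case split, whereas the $q = q_0+1$, $r = a - r_0$ route does require the separate $r_0 = 0$ case exactly as you flag. Your uniqueness argument --- subtract to get $a(q_1 - q_2) = r_1 - r_2$, bound $|r_1 - r_2| < a$, conclude $r_1 = r_2$ and then $q_1 = q_2$ --- is precisely the template the paper itself uses later in the uniqueness half of Theorem \ref{pkDivAlg}, where the same subtraction yields congruences modulo $\widehat{a}$ and modulo $p^{\alpha+k}$; so your argument is not just correct but the natural specialization of the paper's $p$-adic uniqueness proof back to $k=0$. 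One small caution on your closing remark: the paper defines $\lceil x \rceil$ as the least integer \emph{strictly} greater than $x$, so under that convention $q$ equals $\lceil b/a \rceil$ only when $a \nmid b$; the divisible case, where $r = 0$, is exactly the boundary case you already isolated, which is worth keeping in mind since that strict ceiling is the one used in Algorithm \ref{padicSyl}.
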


Note that Theorem \ref{DivAlg} is greedy in the sense that it finds the smallest $q$ such that $aq > b$, i.e. so that $\frac{a}{b}>\frac{1}{q}$.

\begin{alg}[F-S Greedy Algorithm] \label{ClassicalGreedy} Let $-1<\frac{a}{b} \in \Q$, with $a>0$ and $\gcd(a,b) = 1$. Iterate Theorem \ref{DivAlg} in the following way:
\begin{eqnarray}\label{iter}
\begin{aligned}
b &=& aq_0-r_0\\
bq_0 &=& r_0q_1 - r_1 \\
&\ \ \vdots&\\
bq_0q_1\cdots q_{i-1} &=& r_{i-1}q_i-r_i\\
&\ \ \vdots&
\end{aligned} \label{Algor}
\end{eqnarray}
The process terminates if any $r_N = 0$. 
\end{alg}
A straightforward computation (c.f. \cite{Mays}) then gives the following:
\begin{thm} Algorithm \ref{ClassicalGreedy} terminates in a finite number of steps and 
\begin{eqnarray}\frac{a}{b} = \sum_{i=0}^N \frac{1}{q_i}. \label{Egypt} \end{eqnarray}
\end{thm}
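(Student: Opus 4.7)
My plan is to establish the two claims---termination and the Egyptian-fraction identity---essentially independently, each by a short induction.

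For termination, observe that at the $i$-th iteration of (\ref{iter}) the modified division algorithm (Theorem~\ref{DivAlg}) is being applied with ``divisor'' $r_{i-1}$ and ``dividend'' $bq_0q_1\cdots q_{i-1}$, producing a new remainder $r_i$ satisfying $0 \le r_i < r_{i-1}$. Since $0 \le r_0 < a$ and each subsequent $r_i$ is a nonnegative integer strictly less than its predecessor, the sequence $r_0 > r_1 > r_2 > \cdots$ must reach $0$ after at most $r_0+1 \le a$ iterations, at which point the algorithm terminates.

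For the identity, I would prove by induction on $i$ the telescoping relation
$$\frac{a}{b} = \sum_{j=0}^{i} \frac{1}{q_j} + \frac{r_i}{bq_0 q_1 \cdots q_i}.$$
The base case is immediate upon dividing $b = aq_0 - r_0$ by $bq_0$. For the inductive step, divide the equation $bq_0 q_1 \cdots q_{i-1} = r_{i-1} q_i - r_i$ by $bq_0 q_1 \cdots q_i$ to obtain
$$\frac{r_{i-1}}{bq_0 q_1 \cdots q_{i-1}} = \frac{1}{q_i} + \frac{r_i}{bq_0 q_1 \cdots q_i},$$
and substitute this into the inductive hypothesis. When the algorithm terminates at $r_N = 0$, the trailing fraction vanishes and the expansion (\ref{Egypt}) falls out.

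The only mild obstacle is verifying that Theorem~\ref{DivAlg}'s positivity hypothesis on the divisor is met at each invocation---i.e., that $r_{i-1} > 0$ before we apply the division algorithm---but this is exactly the statement that the algorithm has not yet terminated, so it holds automatically at every non-terminal step. With that bookkeeping in hand, both halves of the theorem are essentially routine.
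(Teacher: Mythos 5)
Your proof is correct and is precisely the ``straightforward computation'' that the paper delegates to its citation of Mays: termination follows because the remainders form a strictly decreasing sequence of nonnegative integers $r_0 > r_1 > \cdots$ (the paper itself invokes exactly this mechanism when proving the $p^k$-analogue, Theorem \ref{terminates}), and the identity follows from your telescoping relation $\frac{a}{b} = \sum_{j=0}^{i}\frac{1}{q_j} + \frac{r_i}{bq_0\cdots q_i}$. The one point you leave implicit---that every $q_j \neq 0$, so the divisions by $bq_0\cdots q_i$ are legitimate and the unit fractions $\frac{1}{q_j}$ are defined---is exactly what the hypothesis $-1 < \frac{a}{b}$ guarantees, as the paper remarks when comparing it to the condition $k > -\ord{p}{\frac{a}{b}}$ in Algorithm \ref{pkGreedy}.
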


\end{section}
%
%
\begin{section}{A $p$-Adic Division Algorithm Approach}\label{pEgyptSec}
\subsection{The $p^k$-Division Algorithm}\label{Tony1}

We begin the process of retracing the classical approach by generalizing Theorem \ref{DivAlg}. The $p$-adic division algorithm defined here is similar to the one given in \cite{Lag} but differs considerably in the restrictions on the quotient and remainder.
\begin{thm}[$p^k$-Division Algorithm]
\label{pkDivAlg}
Let $p$ be prime and $k\in\Z$. For all $a,b \in \Z[\frac{1}{p}]$, $a>0$, there exist unique $q,r \in\mathbb{Z}[\frac{1}{p}]$ such that \begin{eqnarray}b=aq-r \label{pkmain} \end{eqnarray} with \begin{eqnarray}0\leq r< ap^k \label{pkReal}\end{eqnarray} and  \begin{eqnarray} |r|_p \le |ap^k|_p.\label{pkpAdic} \end{eqnarray}
\begin{proof}
We first prove existence. Since $\widehat{a}$ and $p$ are relatively prime, positive and negative powers of $p$ are defined mod $\widehat{a}$. Let $\alpha = \ord{p}{a}$ and $\beta = \ord{p}{b}$ and take $0\leq \overline{r}<\widehat{a}$ such that  $\overline{r}\equiv -\widehat{b}p^{\beta-\alpha-k}\mod{\widehat{a}}$.\\
(Case 1: $k>\beta-\alpha$.) For some $m \in \Z$ we have
$\overline{r}p^{-\beta+\alpha+k}+\widehat{b}=\widehat{a}m$ which gives $$\widehat{b}p^\beta=\widehat{a}p^\alpha mp^{\beta-\alpha}-\overline{r}p^{\alpha+k}.$$
Thus we can take $q=mp^{\beta-\alpha}$ and $r=\overline{r}p^{\alpha+k}$.\\
(Case 2: $k \le \beta-\alpha$.) For some $m \in \Z$ we have $\overline{r}+\widehat{b}p^{\beta-\alpha-k}=\widehat{a}m$ which gives 
\begin{eqnarray}
\widehat{b}p^\beta=\widehat{a} mp^{\alpha+k}-\overline{r}p^{\alpha+k}. \label{Case2}
\end{eqnarray}
Thus we can take $q=mp^k$ and $r=\overline{r}p^{\alpha+k}$.

In both cases, since $\overline{r} < \widehat{a}$, both (\ref{pkReal}) and (\ref{pkpAdic}) are satisfied.

To show uniqueness, suppose that there exist $q_1,r_1,q_2,r_2$ satisfying (\ref{pkmain}), (\ref{pkReal}), and (\ref{pkpAdic}). Then
\begin{equation*}
a(q_1-q_2)=r_1-r_2, \label{uni}
\end{equation*}
thus $r_1\equiv r_2\mod{\widehat{a}}$. Also $r_1\equiv r_2\mod{p^{\alpha+k}}$ since by assumption $\ord{p}{r_i}\ge \ord{p}{ap^k}$.  Therefore, $r_1\equiv r_2\mod{ap^k}$.  Since both are between $0$ and $ap^k$, $r_1 = r_2$. Thus $q_1=q_2$ and we have uniqueness as desired.
\end{proof}
\end{thm}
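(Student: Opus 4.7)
The plan is to imitate the classical division algorithm of Theorem \ref{DivAlg}, but carry both constraints on the remainder — the real inequality $0 \le r < ap^k$ and the $p$-adic bound $|r|_p \le |ap^k|_p$ — in parallel. Together these two conditions leave no slack, so they should pin $r$ down exactly.

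For existence I would normalize by writing $a = \widehat{a}\,p^\alpha$ and $b = \widehat{b}\,p^\beta$ with $\widehat{a},\widehat{b} \in \Z$ coprime to $p$ and $\widehat{a} > 0$. The $p$-adic bound forces $\ord{p}{r} \ge \alpha + k$, so $r = \overline{r}\,p^{\alpha+k}$ for some $\overline{r} \in \Z[\tfrac{1}{p}]$, and the real bound then reads $0 \le \overline{r} < \widehat{a}$. Substituting into $b=aq-r$ and dividing through by $p^{\alpha+k}$ converts the defining equation into the congruence
\[
\overline{r} \equiv -\widehat{b}\,p^{\beta-\alpha-k} \pmod{\widehat{a}},
\]
which is well-posed because $p$ is a unit modulo $\widehat{a}$. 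A unique $\overline{r} \in [0,\widehat{a})$ solves this congruence, and from it one recovers $q = (b+r)/a$; a short check then confirms $q \in \Z[\tfrac{1}{p}]$.

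For uniqueness, if $(q_1,r_1)$ and $(q_2,r_2)$ both satisfy the conclusion then $a(q_1-q_2) = r_1 - r_2$. Since $r_1, r_2$ both lie in $[0, ap^k)$ and both have $p$-adic valuation at least $\alpha+k$, their difference has ordinary absolute value strictly less than $ap^k$ and $p$-adic valuation at least $\alpha+k$. Dividing the identity through by $ap^k$ yields an element of $\Z[\tfrac{1}{p}]\cap\Z_p = \Z$ whose real absolute value is less than $1$; that element must therefore be $0$, so $r_1=r_2$ and hence $q_1=q_2$.

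The step I expect to require the most care is verifying that the $q$ recovered from the congruence really lies in $\Z[\tfrac{1}{p}]$, rather than picking up some additional rational denominator. The cleanest way to see this is to split into the two cases $k > \beta-\alpha$ and $k \le \beta-\alpha$ — in the first one multiplies through by a positive power of $p$ to interpret $p^{\beta-\alpha-k}$ as a genuine integer before applying B\'ezout, while in the second one can work directly. Beyond that dichotomy the argument is essentially bookkeeping.
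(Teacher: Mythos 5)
Your proposal is correct and follows essentially the same route as the paper's proof: the same normalization $a=\widehat{a}p^\alpha$, $b=\widehat{b}p^\beta$, the same congruence $\overline{r}\equiv -\widehat{b}p^{\beta-\alpha-k} \pmod{\widehat{a}}$ made sensible because $p$ is a unit mod $\widehat{a}$, and the same case split $k>\beta-\alpha$ versus $k\le\beta-\alpha$ to confirm $q\in\Z[\frac{1}{p}]$. Your uniqueness step, dividing $a(q_1-q_2)=r_1-r_2$ by $ap^k$ and invoking $\Z[\frac{1}{p}]\cap\Z_p=\Z$, is just a compact repackaging of the paper's argument that $r_1\equiv r_2$ both mod $\widehat{a}$ and mod $p^{\alpha+k}$, exploiting the same two constraints.
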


Note that the value $\overline{r}$ may have nontrivial order. When $\ord{p}{\overline{r}}\geq 1$ we say a jump occurs. Of course, if $p>\widehat{a}$ then $\overline{r}=\widehat{r}$ and there is no jump.


%

Also notice that one recovers Theorem \ref{DivAlg} from Theorem \ref{pkDivAlg} by setting $k=0$ (or $p=1$) and (redundantly) using the standard absolute value in (\ref{pkpAdic}). Additionally Theorem \ref{pkDivAlg} generalizes the classical algorithm in the form of the following corollary.

\begin{cor} \label{pAdicIsClassicalDivideByp}
Suppose $a,b,p, k\in\mathbb{Z}$ with $p$ prime and $k \le \ord{p}{b}-\ord{p}{a}$. Let $q_p$ denote the quotient from the $p^k$-division algorithm on $a$ and $b$ and let $q_\infty$ denote the quotient from the modified classical division algorithm on $ap^k$ and $b$. Then $q_p=q_\infty p^k$.
\end{cor}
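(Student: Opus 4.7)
The plan is to observe that the hypothesis $k \le \nu_p(b) - \nu_p(a)$ places us precisely in Case 2 of the proof of Theorem \ref{pkDivAlg}, so that the output $q_p$, $r_p$ already comes in a factored form involving $p^k$. Specifically, with $\alpha = \nu_p(a)$ and $\beta = \nu_p(b)$, Case 2 produces an integer $m$ (and a residue $\overline{r}$ with $0 \le \overline{r} < \widehat{a}$) satisfying equation (\ref{Case2}), and then sets $q_p = mp^k$ and $r_p = \overline{r} p^{\alpha+k}$.

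The key step is to reinterpret (\ref{Case2}) as the identity
$$b = (ap^k)\,m \;-\; \overline{r}p^{\alpha+k},$$
which already has the shape of the modified classical division algorithm (Theorem \ref{DivAlg}) applied to the pair $ap^k$ and $b$. I would then verify the bound: since $0 \le \overline{r} < \widehat{a}$, multiplying by $p^{\alpha+k}$ gives $0 \le \overline{r} p^{\alpha+k} < \widehat{a}p^{\alpha+k} = ap^k$, which is exactly the condition (\ref{ClassicBounds}) required of the remainder. By the uniqueness clause of Theorem \ref{DivAlg}, the quotient $q_\infty$ and remainder produced by the classical algorithm on $(ap^k, b)$ must coincide with $m$ and $\overline{r} p^{\alpha+k}$ respectively. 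Therefore $q_p = mp^k = q_\infty p^k$, as claimed.

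There is no real obstacle beyond bookkeeping; the whole content of the corollary is that, in the regime where $k$ is small enough, the $p^k$-division algorithm degenerates into the classical division algorithm applied after multiplying the divisor by $p^k$. The only mild subtlety worth flagging is that one should implicitly assume $\alpha + k \ge 0$ so that $ap^k \in \Z_{>0}$ and $\overline{r}p^{\alpha+k}$ is a genuine non-negative integer; this is automatic whenever the classical algorithm on $(ap^k, b)$ is meaningful, and it is the reason we can match the outputs via uniqueness rather than having to recompute anything.
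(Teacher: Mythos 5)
Your proposal is correct and follows essentially the same route as the paper's proof: both observe that $k \le \ord{p}{b}-\ord{p}{a}$ forces Case 2 of Theorem \ref{pkDivAlg} so that $q_p = mp^k$, rewrite (\ref{Case2}) as $0 \le ap^k m - b < ap^k$, and conclude $m = q_\infty$ via the classical division algorithm. Your explicit appeal to the uniqueness clause of Theorem \ref{DivAlg} and your remark that $\ord{p}{a}+k \ge 0$ is needed for $ap^k$ and the remainder to be genuine integers merely make explicit what the paper leaves implicit.
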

\begin{proof}
Since $k \le \ord{p}{b}-\ord{p}{a}$,  Case 2 in the proof of Theorem \ref{pkDivAlg} applies and $q_p = mp^k$. So it suffices to show that $m = q_\infty$. By (\ref{Case2}), $$0 \le ap^km-b < ap^k.$$ Thus (\ref{ClassicMain}) and (\ref{ClassicBounds}) are satisfied.
\end{proof}

In addition, note that the $p^k$-Division Algorithm 
can be extended to rational $a$ and $b$ in the following way. Suppose $b=\frac{s}{t}$ and $a=\frac{u}{v}$ for $r,s,t,u \in \Z$. To find the quotient and remainder,  clear denominators and compute the desired division algoritm on $b' = sv$ and $a'=ut$ to find $q'$ and $r'$. Then $q = q'$ and $r = \frac{r'}{vt}$ satisfy (\ref{pkmain}), (\ref{pkReal}) and (\ref{pkpAdic}). However, since we are mostly interested in the quotient $\frac{a}{b}$, we will assume $a,b \in \mathbb{Z}[\frac{1}{p}]$.


\subsection{The $p^k$-Greedy Algorithm}\label{Tony2}

Since we now have a generalization of Theorem \ref{DivAlg}, we can substitute it into the iterative process of Algorithm \ref{ClassicalGreedy}.

\begin{alg}[$p^k$-Greedy Algorithm] \label{pkGreedy} Let $\frac{a}{b} \in \Q$, with $a>0$, $\gcd(a,b) = 1$ and $k>-\ord{p}{\frac{a}{b}}$. Iterate Theorem \ref{pkDivAlg} as in (\ref{iter}). 
The process terminates if any $r_N = 0$. 
\end{alg}

The condition $k>-\ord{p}{\frac{a}{b}}$ plays the synonymous role to $-1<\frac{a}{b}$ in the F-S Greedy Algorithm: it prevents the division algorithm from returning a quotient equal to 0. This restriction on $k$ is actually stronger than it needs to be for this alone. However, for reasons related to Corollary \ref{pAdicIsClassicalDivideByp} and explained further below, $k \le -\ord{p}{\frac{a}{b}}$ is generally undesirable. The Knopfmachers avoided this obstruction by defining their initial $a_0$ outside of the inductive pattern so that $\ord{p}{\zeta_1} \ge 1$. A similar strategy could be used here as well, though we find it more desirable to instead simply choose a different $k$ value.

\begin{thm} \label{terminates} Algorithm \ref{pkGreedy} terminates after a finite number of steps.
\end{thm}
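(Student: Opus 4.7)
The plan is to mimic the termination proof for the classical Fibonacci-Sylvester algorithm, where the remainders $r_i$ form a strictly decreasing sequence of nonnegative integers. Here the remainders live in $\Z[\tfrac{1}{p}]$ rather than $\Z$, so the classical monovariant must be replaced; the natural replacement is the $p$-adic unit part $\widehat{r_i}$, which (when $r_i\neq 0$) is a positive integer coprime to $p$.

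I would first unpack the $i$-th iteration of Theorem \ref{pkDivAlg}, which applies the $p^k$-division algorithm with divisor $a=r_{i-1}$ and dividend $bq_0 q_1\cdots q_{i-1}$. Inspecting both cases of the existence portion of that proof, the remainder has the explicit form $r_i=\overline{r_i}\,p^{\ord{p}{r_{i-1}}+k}$ where $\overline{r_i}$ is a nonnegative integer satisfying $0\le \overline{r_i}<\widehat{r_{i-1}}$. If $\overline{r_i}=0$ then $r_i=0$ and the algorithm terminates; otherwise $\overline{r_i}$ is a positive integer strictly less than $\widehat{r_{i-1}}$. The initial step is analogous, yielding $\overline{r_0}<\widehat{a}$. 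Relating $\overline{r_i}$ to the true unit part, $\widehat{r_i}=\overline{r_i}/p^{\ord{p}{\overline{r_i}}}$ with $\ord{p}{\overline{r_i}}\ge 0$, so $\widehat{r_i}\le \overline{r_i}$, with equality precisely when no jump occurs. Combining these observations,
\[
\widehat{r_i}\le \overline{r_i}<\widehat{r_{i-1}},
\]
so $\widehat{a}>\widehat{r_0}>\widehat{r_1}>\cdots$ is a strictly decreasing sequence of positive integers, which must hit zero in at most $\widehat{a}$ steps.

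The only subtle point is confirming that the bound on $\overline{r_i}$ from Theorem \ref{pkDivAlg} transfers to the actual $p$-adic unit $\widehat{r_i}$ in the presence of a jump; the estimate $\widehat{r_i}\le \overline{r_i}$ handles this cleanly, so no auxiliary hypothesis such as $p>\widehat{r_{i-1}}$ is required. I would also note that the hypothesis $k>-\ord{p}{a/b}$ is used only to ensure that the first quotient $q_0$ is nonzero, so the iteration produces genuine progress from its initial step; the descent itself requires nothing further.
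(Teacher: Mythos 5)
Your proof is correct and is essentially the paper's own argument: the paper's proof consists precisely of the chain $\widehat{r}_i \le \overline{r}_i < \widehat{r}_{i-1} \in \Z$, yielding a strictly decreasing sequence of positive integers. You have simply filled in the details the paper leaves implicit (the form $r_i = \overline{r_i}\,p^{\ord{p}{r_{i-1}}+k}$ from both cases of Theorem \ref{pkDivAlg}, and the handling of jumps via $\widehat{r_i} = \overline{r_i}/p^{\ord{p}{\overline{r_i}}} \le \overline{r_i}$), which is a faithful elaboration rather than a different route.
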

\begin{proof} As opposed to Algorithm \ref{ClassicalGreedy}, now we have $q_i, r_i \in \Z[\frac{1}{p}]$ instead of $\Z$. However $$\widehat{r}_i \le \overline{r}_i < \widehat{r}_{i-1} \in \Z,$$ so the sequence  of $\widehat{r_i}$'s is a decreasing sequence of positive integers much like the classical remainders.
\end{proof}

Since the algorithm terminates, again we get that (\ref{Egypt}) holds.


%
\begin{ex}
Consider $\frac{a}{b}=\frac{473}{25}$. Performing the $3$-Greedy Algorithm yields,
\begin{eqnarray*}
25&=&473\cdot 2 - 921\\
25 \cdot 2 &=&921 \cdot \frac{5}{3} - 1485\\ 
25 \cdot 2 \cdot \frac{5}{3} &=& 1485\cdot \frac{115}{81} - 2025\\
25 \cdot 2 \cdot \frac{5}{3} \cdot \frac{115}{81} &=& 2025 \cdot\frac{1150}{19683} - 0
\end{eqnarray*}
and thus $$\frac{473}{25}=\frac{1}{2}+\frac{3}{5}+\frac{3^4}{115}+\frac{3^9}{1150}.$$
However, performing the $3^4$-Greedy Algorithm results in the sum
$$\frac{473}{25} = \frac{1}{23}+\frac{3^4}{5635}+\frac{3^{12}}{28175}.$$
\end{ex}

Applying Corollary \ref{pAdicIsClassicalDivideByp} to Algorithm \ref{pkGreedy} gives another relationship between the classical and $p$-adic algorithms.
\begin{cor} \label{nojumps} Suppose $\frac{a}{b}>0$ and the situation of $k \le -\ord{p}{\frac{a}{b}}$ holds. If no jumps occur than each term of the $p^k$-Greedy Algorithm for $\frac{a}{b}$ is equal to the corresponding term in the F-S Greedy Algorithm on $\frac{ap^k}{b}$ divided by $p^k$.
\end{cor}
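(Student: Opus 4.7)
The plan is to prove the corollary by induction on the step index $i$, maintaining two paired identities: $q_i = q_i' p^k$ and $r_i = p^{ik} r_i'$, where the primed variables denote the outputs of the classical F-S Greedy Algorithm on $\frac{ap^k}{b}$.

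For the base case $i = 0$, the hypothesis $k \le -\ord{p}{a/b}$ is exactly the Case 2 condition $k \le \ord{p}{b} - \ord{p}{a}$ in the proof of Theorem \ref{pkDivAlg}, so Corollary \ref{pAdicIsClassicalDivideByp} applies directly to the initial division $b = a q_0 - r_0$ and yields $q_0 = q_0' p^k$. A one-line computation then gives $r_0 = a q_0 - b = (ap^k) q_0' - b = r_0'$, which matches the paired identity at $i=0$ since $p^{0 \cdot k} = 1$.

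For the inductive step, assume the paired identities for all indices strictly less than $i$. At step $i$ the $p^k$-greedy performs a $p^k$-division of $D_i := b q_0 \cdots q_{i-1}$ by $r_{i-1}$, while F-S divides $D_i' := b q_0' \cdots q_{i-1}'$ by $r_{i-1}'$. The inductive hypothesis gives $D_i = D_i' p^{ik}$ and $r_{i-1} = r_{i-1}' p^{(i-1)k}$, so the ratio $D_i / (r_{i-1} p^k)$ reduces to $D_i' / r_{i-1}'$. Once the Case 2 order inequality is verified at step $i$, Corollary \ref{pAdicIsClassicalDivideByp} yields $q_i = p^k q_\infty$, where $q_\infty$ is the classical quotient of $D_i$ by $r_{i-1} p^k$; by the ratio identity just noted, this is exactly the F-S classical quotient $q_i'$, hence $q_i = q_i' p^k$. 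Subtracting then produces $r_i = r_{i-1} q_i - D_i = p^{ik}(r_{i-1}' q_i' - D_i') = r_i' p^{ik}$, completing the induction.

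The main obstacle is ensuring that the Case 2 condition $k \le \ord{p}{D_i} - \ord{p}{r_{i-1}}$ continues to hold at every step, and this is where the no-jump hypothesis does its work. A no-jump at step $j$ means $\ord{p}{\overline{r}_j} = 0$, so Case 2 forces $\ord{p}{r_j} = \ord{p}{r_{j-1}} + k$ exactly, rather than the weaker inequality $\ge$. Iterating from $\ord{p}{r_0} = \ord{p}{a} + k$ keeps the divisor orders at the minimal value compatible with the paired identity, and combined with $\ord{p}{b} = 0$ (which follows from $\gcd(ap^k, b) = 1$) the order computation at step $i$ collapses back to the base-case inequality, so Case 2 continues to apply throughout. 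Once this is established, the remainder of the argument is routine algebra.
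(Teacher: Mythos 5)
Your overall architecture---induction on the step index maintaining the paired identities $q_i = q_i'p^k$ and $r_i = r_i'p^{ik}$, applying Corollary \ref{pAdicIsClassicalDivideByp} at each step, and using the no-jump hypothesis to keep the Case 2 order condition alive---is exactly the argument the paper intends (the paper states the corollary without a written proof, as an iterated application of Corollary \ref{pAdicIsClassicalDivideByp}). Your transfer of quotients and remainders is correct: the ratio $D_i/(r_{i-1}p^k)$ reduces to $D_i'/r_{i-1}'$, and uniqueness of the classical quotient (which holds with real divisor and dividend, so the restriction of Corollary \ref{pAdicIsClassicalDivideByp} to integer inputs extends harmlessly to $\Z[\frac{1}{p}]$) identifies $q_\infty$ with $q_i'$, after which $r_i = r_i'p^{ik}$ follows by subtraction; this also shows the two algorithms terminate together, since $r_i = 0$ if and only if $r_i' = 0$.

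However, the justification you give for the crucial persistence of Case 2 rests on a false claim: $\gcd(ap^k,b)=1$ need not hold, and $\ord{p}{b}=0$ is false in general. The hypothesis $k \le -\ord{p}{\frac{a}{b}}$ with $k \ge 1$ in fact forces $p \mid b$: in the paper's own example $\frac{5}{121}$ with $p=11$, $k=1$, one has $\ord{11}{b}=2$ and $\gcd(ap^k,b)=11$. Fortunately the inequality you need survives without that claim. Write $\alpha=\ord{p}{a}$ and $\beta=\ord{p}{b}$, so the hypothesis reads $k \le \beta-\alpha$. With no jumps, Case 2 gives the exact increment $\ord{p}{r_j}=\ord{p}{r_{j-1}}+k$, hence $\ord{p}{r_{i-1}}=\alpha+ik$; and since each $q_j = q_j'p^k$ with $q_j' \in \Z$, we get $\ord{p}{q_j}\ge k$, so $\ord{p}{D_i}=\beta+\sum_{j<i}\ord{p}{q_j}\ge \beta+ik$. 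Therefore $\ord{p}{D_i}-\ord{p}{r_{i-1}}\ge \beta-\alpha\ge k$, which is precisely the Case 2 condition at step $i$. Substituting this computation for your parenthetical about $\ord{p}{b}$ closes the only gap; the rest of your induction stands as written.
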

\begin{ex}
Consider the fraction $\frac{a}{b}=\frac{5}{121}=\frac{5}{11^2}$.  If we take $p=11$ and $k=1$, then $k \le -\ord{11}{\frac{a}{b}}=2$.  Applying the $11$-Greedy Algorithm to $\frac{5}{121}$ encounters no jumps and gives $$\frac{5}{121}=\frac{1}{33}+\frac{1}{99}+\frac{1}{1089}.$$ On $\frac{5\cdot 11}{121} = \frac{5}{11}$, the F-S Greedy Algorithm yields $$\frac{5}{11}=\frac{1}{3}+\frac{1}{9}+\frac{1}{99}.$$
\end{ex}

The restriction on jumps is sufficient but not necessary. For example, the $3$-Greedy Algorithm on $\frac{22}{45}$ encounters a jump, yet the relation above to the classical algorithm on $\frac{22}{15}$ still holds.

Also notice that, again with $k \le -\ord{p}{\frac{a}{b}}$, if $\frac{ap^k}{b} >1$ then the F-S Greedy Algorithm returns quotients $q_i=1$ until the remaining value to be decomposed is less than 1. Hence, by Corollary \ref{nojumps} the $p^k$-Greedy Algorithm on $\frac{a}{b}$ produces a finite string of terms equal to $\frac{1}{p^k}$. For these reasons we only consider the cases where $k>-\ord{p}{\frac{a}{b}}$. 

This condition is minor, though, in the grand scheme. Each inductive step of Algorithm \ref{pkGreedy} is independent of each other with respect to the value of $k$. So if the desired $k$ fails the order criteria, it is possible to temporarily use a sufficiently large $k'$ in the initial step(s) and then switch back to the desired $k$ value once the corresponding orders become large enough. In this way finite $p$-adic Sylvester expansions can be found for all rationals.

\subsection{Connection Between Approaches}\label{proof}

We are now in the position to return to the modification of the Knopfmachers' algorithm given in Section \ref{padic}.

\begin{thm}\label{same}
For $\zeta \in \Q$, Algorithm \ref{padicSyl} and Algorithm \ref{pkGreedy} produce the same output.
\begin{proof}
Suppose $\zeta_i \in \Q$ for some $i \ge 0$. Then $\zeta_i = \frac{a}{b}$ for $a$, $b\in\Z$. Algorithm \ref{padicSyl} produces $$q_i= \left\langle \frac{b}{a} \right\rangle_k+\left\lceil\frac{b/a-\langle b/a\rangle_k}{p^k}\right\rceil p^k.$$ Let $r=aq_i-b$ and $\sigma_k=\left\langle \frac{b}{a} \right\rangle_k-\frac{b}{a}$. Then 
$$0 \le \frac{r}{a} = \sigma_k + \left\lceil\frac{-\sigma_k}{p^k}\right\rceil p^k<p^k.$$
Since $\ord{p}{\sigma_k} \ge k$, then $|r|_p \le |ap^k|_p$. Hence both (\ref{pkReal}) and (\ref{pkpAdic}) are satisfied.
\end{proof}
\end{thm}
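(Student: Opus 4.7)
My plan is to induct on $i$ and, at each step, invoke the uniqueness clause of Theorem \ref{pkDivAlg} to identify the two algorithms' outputs. Specifically, I will show that the $q_i$ produced by Algorithm \ref{padicSyl}, together with $r_i := aq_i - b$ for a representation $\zeta_i = a/b$ with $a>0$, satisfies the bounds (\ref{pkReal}) and (\ref{pkpAdic}); uniqueness then forces $q_i$ to coincide with the quotient produced at the corresponding step of Algorithm \ref{pkGreedy}.

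Before beginning the induction, I would record one small observation: the quotient in Theorem \ref{pkDivAlg} is unchanged by multiplying $a$ and $b$ by a common positive rational, since the conditions (\ref{pkReal}) and (\ref{pkpAdic}) rescale accordingly and uniqueness does the rest. This matters because Algorithm \ref{pkGreedy} at step $i$ applies the $p^k$-division algorithm to the un-reduced pair $(r_{i-1},\, b q_0 \cdots q_{i-1})$, whereas Algorithm \ref{padicSyl} processes the rational $\zeta_i$ intrinsically; scale-invariance says both computations return the same quotient, so the per-step comparison reduces to working with a single representation $\zeta_i = a/b$.

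The central calculation is then short. Using $\psi(\zeta_i) = a/b$ and writing $\sigma_k := \langle b/a \rangle_k - b/a$, the defining formula of Algorithm \ref{padicSyl} collapses to
$$q_i = \left\langle \frac{b}{a} \right\rangle_k + \left\lceil \frac{-\sigma_k}{p^k} \right\rceil p^k,$$
from which $r_i/a = \sigma_k + \lceil -\sigma_k/p^k \rceil p^k$. The real bound $0 \le r_i < a p^k$ is then the elementary ceiling-function identity $0 \le \lceil x \rceil - x < 1$ applied at $x = -\sigma_k/p^k$. The $p$-adic bound $|r_i|_p \le |a p^k|_p$ follows because $\ord{p}{\sigma_k} \ge k$ by construction of $\langle \cdot \rangle_k$, while the other summand of $r_i$ obviously has $p$-adic order at least $\ord{p}{a}+k$.

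The step I expect to require the most care is the scale-invariance remark: it is tempting to compare the two algorithms step-by-step directly, but they carry around different data (a reduced rational versus an accumulated numerator and denominator), and without noting that the $p^k$-division quotient depends only on the ratio, the per-step identification would not even be well posed. Everything else is routine manipulation of $\langle \cdot \rangle_k$ and the ceiling function.
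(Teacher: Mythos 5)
Your proposal is correct and follows essentially the same route as the paper's proof: you verify that the quotient produced by Algorithm \ref{padicSyl} satisfies the bounds (\ref{pkReal}) and (\ref{pkpAdic}) via the same $\sigma_k$ and ceiling-function computation, then invoke the uniqueness clause of Theorem \ref{pkDivAlg}. Your explicit scale-invariance remark (that the $p^k$-division quotient depends only on the ratio $b/a$) and the inductive scaffolding are details the paper leaves implicit, but they do not constitute a different argument.
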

The proof of Theorem \ref{MainThm} is simply a combination of Theorem \ref{terminates} and Theorem \ref{same}. 
Further the condition of $k >-\ord{p}{\zeta}$ in Algorithm \ref{padicSyl} is explained in light of Corollary \ref{nojumps}. Indeed, for negative or irrational $\zeta$ with $k \le -\ord{p}{\zeta}$, a statement analogous to Corollary \ref{nojumps} holds and thus Algorithm \ref{padicSyl} produces a $p$-adically divergent result. Though, as mentioned above, this can be worked around by temporarily using alternate $k$-values.

\end{section}

\subsection*{Acknowledgements}
Thank you to Anthony Martino for being the first to convince me that Egyptian fractions are interesting and for helping me work out parts of Sections \ref{Tony1} and \ref{Tony2} in the $k=1$ case. 

This work is dedicated to my children, Gedion and Eomji.

%
\end{document}